\newtheorem{theorem}{Theorem}
\newtheorem{lemma}{Lemma}
\newtheorem{false statement}{False Statement}
\newtheorem{corollary}{Corollary}
\theoremstyle{definition}
\newtheorem{claim}{Claim}
\newtheorem{conjecture}{Conjecture}
\newtheorem{case}{Case}
\newcounter{mathitem}
\newenvironment{mathitem}
  {\begin{list}{{$(\roman{mathitem})$}}{
   \setcounter{mathitem}{0}
   \usecounter{mathitem}
   \setlength{\topsep}{0pt plus 2pt minus 0pt}
   \setlength{\parskip}{0pt plus 2pt minus 0pt}
   \setlength{\partopsep}{0pt plus 2pt minus 0pt}
   \setlength{\parsep}{0pt plus 2pt minus 0pt}
   \setlength{\leftmargin}{20pt}
   \setlength{\itemsep}{0pt plus 2pt minus 0pt}}}
  {\end{list}}
\newcommand{\pa}{{\rm par}}
\begin{document}

\title{\bf\Large On path-quasar Ramsey numbers\thanks{Supported
by NSFC (No. 11271300) and the Doctorate Foundation of Northwestern
Polytechnical University (No. cx201202 and No. cx201326). E-mail
addresses: libinlong@mail.nwpu.edu.cn (B. Li),
ningbo\_math84@mail.nwpu.edu.cn (B. Ning).}}

\date{}

\author{Binlong Li$^{a,b}$ and Bo Ning$^a$\\[2mm]
\small $^a$ Department of Applied Mathematics,
\small Northwestern Polytechnical University,\\
\small Xi'an, Shaanxi 710072, P.R.~China\\
\small $^b$ Department of Mathematics, University of West Bohemia,\\
\small Univerzitn\'i 8, 306 14 Plze\v n, Czech Republic\\}
\maketitle

\begin{abstract}
Let $G_1$ and $G_2$ be two given graphs. The Ramsey number
$R(G_1,G_2)$ is the least integer $r$ such that for every graph $G$
on $r$ vertices, either $G$ contains a $G_1$ or $\overline{G}$
contains a $G_2$. Parsons gave a recursive formula to determine the
values of $R(P_n,K_{1,m})$, where $P_n$ is a path on $n$ vertices
and $K_{1,m}$ is a star on $m+1$ vertices. In this note, we first
give an explicit formula for the path-star Ramsey numbers. Secondly,
we study the Ramsey numbers $R(P_n,K_1\vee F_m)$, where $F_m$ is a
linear forest on $m$ vertices. We determine the exact values of
$R(P_n,K_1\vee F_m)$ for the cases $m\leq n$ and $m\geq 2n$, and for
the case that $F_m$ has no odd component. Moreover, we give a lower
bound and an upper bound for the case $n+1\leq m\leq 2n-1$ and $F_m$
has at least one odd component.

\medskip
\noindent {\bf Keywords:} Ramsey number; path; star; quasar
\smallskip

\noindent {\bf AMS Subject Classification:} 05C55, 05D10

\end{abstract}

\section{Introduction}

We use Bondy and Murty \cite{Bondy_Murty} for terminology and
notation not defined here, and consider finite simple graphs only.

Let $G$ be a graph. We denote by $\nu(G)$ the order of $G$, by
$\delta(G)$ the minimum degree of $G$, by $\omega(G)$ the number of
components of $G$, and by $o(G)$ the number of components of $G$
with an odd order.

Let $G_1$ and $G_2$ be two graphs. The \emph{Ramsey number}
$R(G_1,G_2)$, is defined as the least integer $r$ such that for
every graph $G$ on $r$ vertices, either $G$ contains a $G_1$ or
$\overline{G}$ contains a $G_2$, where $\overline{G}$ is the
complement of $G$. If $G_1$ and $G_2$ are both complete, then
$R(G_1,G_2)$ is the classical Ramsey number $r(\nu(G_1),\nu(G_2))$.
Otherwise, $R(G_1,G_2)$ is usually called the \emph{generalized
Ramsey number}. We refer the reader to Graham et. al.
\cite{Graham_Rothschild_Spencer} for an introduction to the area of
Ramsey theory.

We denote by $P_n$ the path on $n$ vertices. The graph $K_{1,m}$,
$m\geq 2$, is called a \emph{star}. The only vertex of degree $m$ is
called the \emph{center} of the star. In 1974, Parsons
\cite{Parsons} determined $R(P_n,K_{1,m})$ for all $n,m$. We list
Parsons' result as bellow.

\begin{theorem}[Parsons \cite{Parsons}]
$$R(P_n,K_{1,m})=\left\{\begin{array}{ll}
n,      & 2\leq m\leq\lceil n/2\rceil;\\
2m-1,   & \lceil n/2\rceil+1\leq m\leq n;\\
\max\{R(P_{n-1},K_{1,m}),R(P_n,K_{1,m-n+1})+n-1\},
        & n\geq 3 \mbox{ and } m\geq n+1.
\end{array}\right.$$
\end{theorem}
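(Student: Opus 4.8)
The plan is to translate the statement into the language of minimum degree. Since $\overline G$ contains $K_{1,m}$ precisely when some vertex of $G$ has at least $m$ non-neighbours, the complement $\overline G$ is $K_{1,m}$-free exactly when $\delta(G)\ge |V(G)|-m$. Hence $R(P_n,K_{1,m})$ is the least $r$ for which every graph $G$ on $r$ vertices with $\delta(G)\ge r-m$ contains $P_n$: upper bounds amount to forcing a $P_n$ from a minimum-degree hypothesis, and lower bounds to exhibiting a $P_n$-free graph with large minimum degree. Two classical facts will carry most of the work. First, any graph with $\delta\ge n-1$ contains $P_n$ (an endpoint of a longest path has all its neighbours on that path), which already yields the crude bound $R(P_n,K_{1,j})\le j+n-1$. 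Second, a connected graph on $N$ vertices with $\delta\ge d$ has a path on $\min\{N,2d+1\}$ vertices; in particular $\delta\ge (N-1)/2$ forces a Hamiltonian path.

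The two non-recursive ranges are then immediate. For $2\le m\le\lceil n/2\rceil$ the graph $K_{n-1}$ is $P_n$-free with empty complement, giving $R\ge n$, while any $G$ on $n$ vertices has $\delta(G)\ge n-m\ge\lfloor n/2\rfloor\ge (n-1)/2$ and hence a Hamiltonian path $P_n$, giving $R\le n$. For $\lceil n/2\rceil+1\le m\le n$ the disjoint union $2K_{m-1}$ is $P_n$-free (each block has only $m-1\le n-1$ vertices) and every vertex has exactly $m-1$ non-neighbours, giving $R\ge 2m-1$; conversely any $G$ on $2m-1$ vertices with $\delta(G)\ge m-1$ is connected (two components would need $2m$ vertices) and so has a Hamiltonian path on $2m-1\ge n$ vertices, giving $R\le 2m-1$.

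For the recursion I would prove the two bounds separately. The lower bound has two parts. Monotonicity of the Ramsey number in the path length (any $P_{n-1}$-free graph is $P_n$-free) gives $R(P_n,K_{1,m})\ge R(P_{n-1},K_{1,m})$. For the other part, let $H$ be an extremal graph for $(P_n,K_{1,m-n+1})$, so $H$ is $P_n$-free on $r'=R(P_n,K_{1,m-n+1})-1$ vertices with every vertex missing at most $m-n$ others. The disjoint union $G=H\sqcup K_{n-1}$ is $P_n$-free, a vertex of $H$ now misses at most $(m-n)+(n-1)=m-1$ others, and a vertex of $K_{n-1}$ misses exactly $r'$ others; since the crude bound gives $R(P_n,K_{1,m-n+1})\le m$ and hence $r'\le m-1$, the complement of $G$ is $K_{1,m}$-free, so $R(P_n,K_{1,m})\ge r'+n=R(P_n,K_{1,m-n+1})+n-1$.

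The upper bound is where the real difficulty lies. Writing $r$ for the claimed maximum, suppose for contradiction that some $G$ on $r$ vertices has $\delta(G)\ge r-m$ yet is $P_n$-free. Because $r\ge R(P_{n-1},K_{1,m})$ and the $K_{1,m}$-freeness of $\overline G$ is inherited by induced subgraphs, $G$ contains $P_{n-1}$, so a longest path $P=v_1\cdots v_\ell$ has exactly $\ell=n-1$ vertices. The goal is to delete a set $S$ of exactly $n-1$ vertices so that $G-S$ is a valid instance for the smaller star, that is, $P_n$-free with $\delta(G-S)\ge r-m=|V(G-S)|-(m-n+1)$; induction then forces $|V(G-S)|\le R(P_n,K_{1,m-n+1})-1$ and hence $r\le R(P_n,K_{1,m-n+1})+n-2$, contradicting the definition of $r$. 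When the endpoints of $P$ are adjacent, $V(P)$ is a whole component of $G$ (any outside neighbour of the resulting cycle would create a $P_n$), so $S=V(P)$ works verbatim since deleting an entire component changes no surviving degree. \textbf{The main obstacle is the remaining case}, in which $v_1v_{n-1}\notin E(G)$: here I would run Pósa's rotation argument on the endpoints of $P$ to produce either a longest path whose endpoints are adjacent (reducing to the clean case) or a set of $n-1$ vertices separated from the rest of $G$ whose deletion preserves the minimum-degree bound. Controlling this rotation analysis so that exactly $n-1$ vertices can always be peeled off while keeping $\delta\ge r-m$ is the technical heart of the theorem.
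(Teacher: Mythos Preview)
The paper does not prove this statement; Theorem~1 is cited from Parsons as background, and the paper instead establishes the explicit closed form $R(P_n,K_{1,m})=t(n,m)$ (its Theorem~2) by a direct, non-recursive argument. There the upper bound runs as follows: one first shows $t(n,m)\ge m+\lfloor n/2\rfloor$ (Claim~1), and then, given a $P_n$-free $G$ on $t(n,m)$ vertices, either every component has fewer than $n$ vertices, in which case the arithmetic of $\mathcal L[t-m+1,n-1]$ forces some component of order at most $t(n,m)-m$ whose vertices have $\ge m$ non-neighbours, or some component has at least $n$ vertices, in which case Theorem~5 yields a vertex of degree $<\lfloor n/2\rfloor$, again with $\ge m$ non-neighbours. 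No deletion-and-recurse step appears. Your approach, aimed at Parsons' recursion itself, is therefore not comparable to anything in the paper.

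Your two base ranges and the recursion lower bound are correct. The gap is where you flag it, but P\'osa rotation is the wrong tool. For the inductive step you need $|S|=n-1$ with $\delta(G-S)\ge r-m$; since no surviving vertex may lose a neighbour, $S$ must be a union of whole components of $G$. When the endpoints of $P$ are adjacent, $V(P)$ is such a component, but rotation does not reduce the remaining case to this one: its output only says that the reachable endpoints have all their neighbours inside $V(P)$, and when the component of $P$ has $\ge n$ vertices (e.g.\ a path $v_1v_2v_3v_4$ with an extra vertex joined to $v_2,v_3$) no rotation is available and no union of components of size exactly $n-1$ need exist. What actually closes that case is the paper's degree argument transplanted into your framework: from your already-proved lower bound one gets, by induction on $m$, that $r\ge R(P_n,K_{1,m-n+1})+n-1\ge (m-n+1+\lfloor n/2\rfloor)+n-1=m+\lfloor n/2\rfloor$; then Theorem~5 applied to the large component produces a vertex of degree $<\lfloor n/2\rfloor$, hence with $\ge r-\lfloor n/2\rfloor\ge m$ non-neighbours, contradicting $K_{1,m}$-freeness of $\overline G$ outright rather than via recursion.
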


It is trivial that $R(P_2,K_{1,m})=m+1$. So the above recursive
formula can be used to determine all path-star Ramsey numbers.

In this note, we will first give an explicit formula for the Ramsey
numbers of paths versus stars. Let $t(n,m)$, $n,m\geq 2$, be the
values defined as
$$t(n,m)=\left\{\begin{array}{ll}
(n-1)\cdot\beta+1,  &\alpha\leq\gamma;\\
\lfloor(m-1)/\beta\rfloor+m,  &\alpha>\gamma,
\end{array}\right.$$
where
$$\alpha=\frac{m-1}{n-1}, \beta=\lceil\alpha\rceil \mbox{ and }
\gamma=\frac{\beta^2}{\beta+1}.$$

\begin{theorem}
$R(P_n,K_{1,m})=t(n,m)$ for all $n,m\geq 2$.
\end{theorem}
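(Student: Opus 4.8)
The plan is to verify that the explicit quantity $t(n,m)$ satisfies exactly the same base values and recursion that, by Theorem~1 together with $R(P_2,K_{1,m})=m+1$, characterise $R(P_n,K_{1,m})$; an induction on $n+m$ (which the recursion strictly decreases) then yields $R(P_n,K_{1,m})=t(n,m)$. After using the induction hypothesis to rewrite the two Ramsey numbers in Parsons' recursion as $t(n-1,m)$ and $t(n,m-n+1)+n-1$, the whole theorem reduces to the purely arithmetic identity
$$t(n,m)=\max\{t(n-1,m),\ t(n,m-n+1)+n-1\},\qquad n\ge3,\ m\ge n+1,$$
together with a check of the three non-recursive ranges. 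The latter is immediate from the definition: for $2\le m\le\lceil n/2\rceil$ one has $0<\alpha\le\tfrac12$, hence $\beta=1$ and $\alpha\le\gamma=\tfrac12$, giving $t(n,m)=n$; for $\lceil n/2\rceil+1\le m\le n$ one has $\tfrac12<\alpha\le1$, hence $\beta=1$ and $\alpha>\gamma$, giving $t(n,m)=2m-1$; and for $n=2$ one has $\beta=m-1$ and $\alpha>\gamma$, giving $t(2,m)=m+1$.

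To attack the identity I would first record the clean effect of the substitution $m\mapsto m-n+1$: since $\alpha=(m-1)/(n-1)$ it lowers $\alpha$ by exactly $1$ and hence $\beta$ by exactly $1$, so (adding back $n-1$) the second term evaluates to
$$t(n,m-n+1)+n-1=\begin{cases}(n-1)\beta+1,&\alpha\le\beta-1+\tfrac1\beta,\\[2pt]\lfloor(m-n)/(\beta-1)\rfloor+m,&\alpha>\beta-1+\tfrac1\beta.\end{cases}$$
The two thresholds $\gamma=\beta-1+\tfrac1{\beta+1}$ (the Case~A/B cutoff for $t(n,m)$) and $\beta-1+\tfrac1\beta$ (the cutoff above) split the band $\alpha\in(\beta-1,\beta]$ into three regions, and in each I would compute $t(n,m)$ and both terms. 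The decisive simplification is that the Case~B value $\lfloor(m-1)/\beta\rfloor+m$ depends on $n$ only through $\beta$; writing $\beta_1=\lceil(m-1)/(n-2)\rceil$ for the first term's parameter (so $\beta_1\ge\beta$), it follows that whenever $\beta_1=\beta$ and $(n-1,m)$ lies in Case~B one gets $t(n-1,m)=t(n,m)$ outright.

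In the lowest region $\alpha\le\gamma$ (Case~A) the second term equals $(n-1)\beta+1=t(n,m)$ directly, so attainment is free and only the bound $t(n-1,m)\le t(n,m)$ remains. In the two Case~B regions the argument rests on the dichotomy $\beta_1=\beta$ versus $\beta_1>\beta$, i.e.\ $m-1\le\beta(n-2)$ versus $m-1>\beta(n-2)$. When $\beta_1=\beta$ the first term already reaches $t(n,m)$, since $\alpha_1=(m-1)/(n-2)\ge\alpha>\gamma$ puts $(n-1,m)$ in Case~B and the observation above gives $t(n-1,m)=\lfloor(m-1)/\beta\rfloor+m=t(n,m)$. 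When $\beta_1>\beta$ one must instead show the \emph{second} term attains $t(n,m)$, and this is where the work concentrates: I expect these floor reconciliations to be the main obstacle. Concretely, $m-1>\beta(n-2)$ together with the region's upper bound ($\alpha\le\beta-1+\tfrac1\beta$ in the middle region, $\alpha\le\beta$ in the top one) pins down $\lfloor(m-1)/\beta\rfloor=n-2$, after which one verifies that the second term equals $t(n,m)$ — in the top region by showing the two \emph{different} floor expressions $\lfloor(m-n)/(\beta-1)\rfloor$ and $\lfloor(m-1)/\beta\rfloor$ coincide, and in the middle region by showing $(n-1)\beta+1$ does (which forces the single value $m-1=(n-1)\beta-(n-2)$, occurring only when $n=\beta+1$). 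Alongside this one checks the two upper bounds $t(n-1,m)\le t(n,m)$ and $t(n,m-n+1)+n-1\le t(n,m)$, so the maximum is never overshot. Finally a few boundary configurations — especially those with $n$ close to $\beta$, where the very narrow middle region can meet $\beta_1>\beta$ — must be isolated by hand; showing that these collapse to a single admissible $m$ (or are empty, as happens when $n\le\beta$) is the fiddliest part of the argument.
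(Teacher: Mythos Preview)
Your approach is sound in principle but takes a genuinely different route from the paper. You reduce Theorem~2 to Parsons' recursion (Theorem~1) and set out to verify that $t(n,m)$ satisfies the same base cases and the recursive identity $t(n,m)=\max\{t(n-1,m),\,t(n,m-n+1)+n-1\}$; this is a legitimate strategy, and your observation that $m\mapsto m-n+1$ shifts $\alpha$ (hence $\beta$) down by exactly~$1$ is the right structural handle. The paper, however, deliberately gives an \emph{independent} proof that does not invoke Theorem~1 at all. It first uses Lemma~1 to rewrite $t(n,m)$ as $\min\{t: t\notin\mathcal{L}[t-m+1,n-1]\}$, and then argues graph-theoretically: the lower bound comes from an explicit extremal graph on $t(n,m)-1$ vertices (a disjoint union of cliques whose orders lie in $[r-m,n-1]$), while the upper bound is a short two-case analysis on the component structure of $G$, using Theorem~5 (the Dirac-type path lemma) to force a $P_n$ when some component is large.

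The trade-off is clear. The paper's route avoids entirely the floor-function reconciliations you flag as the main obstacle, replacing them with a single arithmetic claim (Claim~1, that $m+\lfloor n/2\rfloor\le r\le m+n-1$) and a clean structural dichotomy; it also yields a self-contained reproof of Parsons' result rather than a deduction from it. Your route is purely arithmetic once Theorem~1 is granted, but pays for that by the delicate three-region split on $\alpha$ and the $\beta_1=\beta$ versus $\beta_1>\beta$ dichotomy, with boundary cases (e.g.\ $n$ close to $\beta$) that you rightly anticipate as fiddly. Both approaches succeed, but the paper's is shorter and more conceptual.
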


The interested reader can compare our formula with Parsons' theorem.
We will give an independent and short proof of Theorem 2 in Section
3.

A \emph{linear forest} is a forest each component of which is a
path. We call the graph obtained by joining a vertex to every vertex
of a nontrivial linear forest a \emph{quasar}. Thus a star is a
quasar, and we call a quasar a \emph{proper} one if it is not a
star.

It may be interesting to consider the Ramsey numbers of paths versus
proper quasars. Some results of this area were obtained. Salman and
Broersma \cite{Salman_Broersma_1,Salman_Broersma_2} studied the
Ramsey numbers of $P_n$ versus $K_1\vee mK_2$ (this graph is called
a \emph{fan} in \cite{Salman_Broersma_1}), and of $P_n$ versus
$K_1\vee P_m$ (this graph is called a \emph{kipas} in
\cite{Salman_Broersma_2}). Both cases have not been completely
solved in \cite{Salman_Broersma_1,Salman_Broersma_2}. Note that fans
and kipases are special cases of quasars. In the following, we will
consider the Ramsey numbers of paths versus proper quasars. As an
application of our results, we will give a complete solution to the
problem of determining the Ramsey numbers of paths versus fans.

We first determine the exact values of $R(P_n,K_1\vee F)$ when
$m\leq n$ or $m\geq 2n$, where $F$ is a non-empty linear forest on
$m$ vertices.

\begin{theorem}
Let $F$ be a non-empty linear forest on $m$ vertices. Then
$$R(P_n,K_1\vee F)=\left\{\begin{array}{ll}
2n-1,       & 2\leq m\leq n;\\
t(n,m),     & n\geq 2 \mbox{ and }m\geq 2n.
\end{array}\right.$$
\end{theorem}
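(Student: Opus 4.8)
The plan is to sandwich $K_1\vee F$ between a star and a kipas, dispose of the lower bounds by two separate constructions, and reduce both upper bounds to a single embedding statement about linear forests in complements of $P_n$-free graphs.

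First the lower bounds. For $2\le m\le n$ I would take $G=2K_{n-1}$ on $2n-2$ vertices: it is $P_n$-free since each component has only $n-1$ vertices, while in $\overline G=K_{n-1,n-1}$ the $\overline G$-neighbourhood of every vertex is a single colour class, hence independent; as $F$ is non-empty it contains an edge and cannot be placed in an independent set, so $\overline G$ has no $K_1\vee F$, giving $R(P_n,K_1\vee F)\ge 2n-1$. For $m\ge 2n$ I would use $\overline{K_m}\subseteq F$, so that $K_{1,m}=K_1\vee\overline{K_m}\subseteq K_1\vee F$; thus any graph whose complement avoids $K_{1,m}$ also avoids $K_1\vee F$, and the extremal graph certifying $R(P_n,K_{1,m})=t(n,m)$ from Theorem 2 yields $R(P_n,K_1\vee F)\ge t(n,m)$.

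For the upper bounds, let $G$ be a graph on $N$ vertices (with $N=2n-1$, resp.\ $N=t(n,m)$) containing no $P_n$; I must locate $K_1\vee F$ in $\overline G$. The first step is to produce an apex, a vertex $v$ with many non-neighbours in $G$. When $m\ge 2n$, Theorem 2 on $t(n,m)$ vertices already gives $K_{1,m}\subseteq\overline G$, i.e.\ a vertex $v$ with $|W|\ge m$, where $W:=N_{\overline G}(v)$. When $m\le n$, the elementary fact that a graph has a path on $\delta(G)+1$ vertices forces $\delta(G)\le n-2$, so a minimum-degree vertex $v$ satisfies $\deg_{\overline G}(v)\ge n\ge m$. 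In either case it suffices to embed $F$ into $H:=\overline{G[W]}$, the complement of the $P_n$-free graph $G[W]$ on $|W|\ge m$ vertices, since any copy of $F$ there is completely joined to $v$ in $\overline G$.

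The heart of the matter is therefore the embedding claim: the complement of a $P_n$-free graph on enough vertices contains every linear forest on $m$ vertices. I would prove it by exploiting the structure of the $P_n$-free graph $G[W]$—its components have bounded longest path, so $H$ is close to a complete multipartite graph and generically carries a Hamiltonian path, from which any linear forest on $\le |W|$ vertices is carved by deleting edges. The main obstacle, and the reason the theorem stops at $m\le n$ and $m\ge 2n$, is the tight case $|W|=m$ in which $G[W]$ has a (near-)dominating vertex: then $H$ has an isolated vertex that no edge-bearing component of $F$ can occupy, and exactly here the parity of the components of $F$ (tracked through $o(\cdot)$) becomes decisive. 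The resolution is to choose the apex cleverly rather than arbitrarily—for instance, a leaf of a star-like $G[W]$ has enormous $\overline G$-degree, and its neighbourhood induces a dense, matchable graph in $\overline G$—so the defect can always be dodged when there is slack: by discarding surplus vertices of $W$ in the $m\ge 2n$ regime, or by passing to such a high-degree apex in the $m\le n$ regime. It is precisely the disappearance of this slack, for $n+1\le m\le 2n-1$ with an odd component, that forces the weaker bounds announced in the abstract; carrying out the careful apex selection and the forest embedding in the two extreme regimes then completes the proof.
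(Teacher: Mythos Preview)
Your lower bounds are correct. The gap is entirely in the upper bounds: you reduce both regimes to an ``embedding claim''---that the complement of any $P_n$-free graph on $|W|\ge m$ vertices contains every linear forest on $m$ vertices---but you never prove it. You name the obstacle (a near-dominating vertex in $G[W]$ yielding an isolated vertex in $H$) and assert that a cleverer apex choice or discarding surplus vertices fixes it, but this step \emph{is} the entire content of the upper bound, and carrying it out is work comparable to establishing the path--wheel Ramsey numbers from scratch. Concretely, for $m\ge 2n$ with $|W|=m$ and a dominating vertex $u$ in $G[W]$, the vertex $u$ is isolated in $H$ and $F=P_m$ cannot be embedded; you would have to argue that such a configuration is impossible in a $P_n$-free graph on $\ge 2n$ vertices, and then iterate through the next layer of obstructions. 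None of this is done.

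The paper bypasses all of it with a pure sandwich, but using wheels rather than the kipases you announce in your first sentence. For $3\le m\le n$ it observes $C_3\subseteq K_1\vee F\subseteq W_{m+\pa(m)}$ and cites $R(P_n,C_3)=R(P_n,W_{m+\pa(m)})=2n-1$ from Theorems~6 and~7; for $m\ge 2n$ it observes $K_{1,m}\subseteq K_1\vee F\subseteq W_m$ and cites $R(P_n,K_{1,m})=R(P_n,W_m)=t(n,m)$ from Theorems~2 and~7. The decisive point is that path--wheel Ramsey numbers are completely known (Theorem~7), whereas path--kipas numbers are not---as the paper itself remarks---so your stated plan of sandwiching with a kipas would not close the argument even if you attempted it. Replacing the kipas by the wheel $W_{m+\pa(m)}$ (resp.\ $W_m$) turns the proof into a two-line citation.
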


So we have an open problem for the case $n+1\leq m\leq 2n-1$. For
this case we have the following upper and lower bounds. By $\pa(m)$
we denote the parity of $m$.

\begin{theorem}
If $n\geq 2$ and $n+1\leq m\leq 2n-1$, and $F$ is a non-empty linear
forest on $m$ vertices, then
\begin{mathitem}
\item[(1)] $R(P_n,K_1\vee F)\leq m+n-2+\pa(m)$; and
\item[(2)] $R(P_n,K_1\vee F)\geq\max\left\{2n-1,\lceil 3m/2\rceil-1,
m+n-o(F)-2\right\}$.
\end{mathitem}
\end{theorem}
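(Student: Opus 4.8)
My plan is to prove the two inequalities by quite different means: part (2) by three explicit constructions, one for each term in the maximum, and part (1) by a degree/embedding argument resting on the Erd\H{o}s--Gallai theorem. Throughout I write $k=(m-o(F))/2$ for the matching number of $F$ (so $kK_2\subseteq F$), and I use the two containments $K_{1,m}\subseteq K_1\vee F$ and $K_1\vee kK_2\subseteq K_1\vee F$, so that to forbid $K_1\vee F$ in a complement it suffices to forbid one of these.

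For the lower bound I would exhibit, for each term, a $P_n$-free graph $G$ on one fewer vertex whose complement omits $K_1\vee F$. (i) For $2n-1$ take $G=2K_{n-1}$; then $\overline G=K_{n-1,n-1}$ has maximum degree $n-1<m$, so it contains no $K_{1,m}$ and hence no $K_1\vee F$. (ii) For $\lceil 3m/2\rceil-1$ take $G=K_{x_1}\cup K_{x_2}\cup K_{x_3}$ with $\lceil m/2\rceil-1\le x_i\le n-1$ and $\sum_i x_i=\lceil 3m/2\rceil-2$; such sizes exist because $3(\lceil m/2\rceil-1)\le\lceil 3m/2\rceil-2\le 3(n-1)$ for $n+1\le m\le 2n-1$. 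Each $x_i\le n-1$ makes $G$ have no $P_n$, while $\overline G=K_{x_1,x_2,x_3}$ has maximum degree $\lceil 3m/2\rceil-2-\min_i x_i\le m-1$, again excluding $K_{1,m}$ and so $K_1\vee F$. (iii) For $m+n-o(F)-2=2k+n-2$ (assuming $F$ has an edge, else this term is dominated by $2n-1$) take $G=2K_{k-1}\cup K_{n-1}$, so $\overline G=K_{k-1,k-1,n-1}$; the neighbourhood of every vertex induces a complete bipartite graph whose smaller side is at most $k-1$, hence has matching number at most $k-1<k$, so $\overline G$ contains no $K_1\vee kK_2$ and therefore no $K_1\vee F$.

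For the upper bound let $G$ be $P_n$-free on $N=m+n-2+\pa(m)$ vertices; I must find a vertex $v$ with $F\subseteq\overline G[\,N_{\overline G}(v)\,]$. First I locate a candidate centre. By the Erd\H{o}s--Gallai theorem $e(G)\le (n-2)N/2$, so some vertex has degree at most $n-2$ in $G$. When $m$ is even I gain one more via a structural observation: a $P_n$-free graph with $\delta(G)\ge n-2$ must be a disjoint union of copies of $K_{n-1}$ (any component is traceable with at most $n-1$ vertices, by the standard bound that minimum degree $\delta$ forces a path on $\min\{s,2\delta+1\}$ vertices; see \cite{Bondy_Murty}), and since $(n-1)\nmid N$ for even $m$ in this range, in fact $\delta(G)\le n-3$. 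Either way one obtains $v$ with $\deg_{\overline G}(v)\ge m$, so $W:=N_{\overline G}(v)$ has at least $m$ vertices and $G[W]$ is again $P_n$-free, while $v$ is $\overline G$-adjacent to all of $W$.

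The remaining, and hardest, step is to embed the prescribed $F$ into $\overline{G[W]}$. If $|W|\ge R(P_n,F)$ this is immediate from the definition of the Ramsey number, and when $\delta(G)$ is small that slack is present; the genuine obstacle is the near-extremal regime where $\delta(G)$ is close to $n-2$, $G$ is close to a disjoint union of $K_{n-1}$'s, and $|W|$ is only about $m$. There $\overline{G[W]}$ is close to a balanced complete multipartite graph with parts of size $n-1$, and I would route the components of $F$ greedily across the parts, using that balanced complete multipartite graphs are traceable and that the deficiency introduced by the clique structure is governed by $\pa(m)$ --- the parity term being exactly the extra room needed to place the last odd component of $F$. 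Making this routing rigorous, i.e.\ showing that every $P_n$-free $G$ with $\delta(G)$ near $n-2$ still produces all of $F$ in the neighbourhood of a minimum-degree vertex, is where the main work lies, and I expect it to require a case analysis organised by how far $G$ is from a disjoint union of cliques.
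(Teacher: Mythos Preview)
Your lower-bound constructions are essentially those of the paper: the paper takes $G_1=2K_{n-1}$, $G_2=K_{\lfloor m/2\rfloor}\cup 2K_{\lceil m/2\rceil-1}$, and $G_3=K_{n-1}\cup 2K_{(m-o(F))/2-1}$, which coincide with your (i) and (iii) and are a particular instance of your (ii). Your verifications that the complements omit $K_1\vee F$ --- by bounding the maximum degree for (i) and (ii), and the matching number of each neighbourhood for (iii) --- are correct and in fact more explicit than the paper, which simply asserts that ``one can check''.

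For the upper bound, however, there is a genuine gap, and you are working much harder than necessary. The paper's argument is one line: a linear forest on $m$ vertices embeds in $C_{m+\pa(m)}$, so $K_1\vee F\subseteq W_{m+\pa(m)}$; since $m+\pa(m)$ is even and lies in $[n+1,2n]$, the known path--wheel Ramsey numbers (Theorem~7 of the paper, due to Chen--Zhang--Zhang and Zhang) give $R(P_n,K_1\vee F)\le R(P_n,W_{m+\pa(m)})=m+\pa(m)+n-2$ immediately. Your direct approach, by contrast, is explicitly unfinished: you correctly locate a centre $v$ with $|N_{\overline G}(v)|\ge m$, but then acknowledge that the ``hardest step'' --- embedding $F$ into $\overline G[N_{\overline G}(v)]$ when $G$ is near a disjoint union of $K_{n-1}$'s --- is deferred to an unspecified case analysis. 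That embedding step is precisely the substance of the cited path--wheel result, so even if completed your argument would amount to reproving Zhang's theorem for even wheels rather than invoking it. As the proposal stands, part~(1) is not established.
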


If $F$ contains no odd component, then the upper bound and the lower
bound in Theorem 4 are equal. Thus we conclude the following.

\begin{corollary}
If $n\geq 2$ and $n+1\leq m\leq 2n-1$, and $F$ is a linear forest on
$m$ vertices such that each component of $F$ has an even order, then
$$R(P_n,K_1\vee F)=m+n-2.$$
\end{corollary}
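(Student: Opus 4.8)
The plan is to deduce the corollary directly from Theorem 4 by showing that, under the hypothesis that every component of $F$ has even order, the upper and lower bounds there collapse to the same value. First I would record two immediate consequences of this hypothesis. Since $m$ is the sum of the (even) orders of the components, $m$ is even, so $\pa(m)=0$; and since no component has odd order, $o(F)=0$. Feeding $\pa(m)=0$ into the upper bound of Theorem 4(1) gives $R(P_n,K_1\vee F)\le m+n-2$ at once.

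For the matching lower bound I would evaluate the maximum in Theorem 4(2), namely $\max\{2n-1,\lceil 3m/2\rceil-1,m+n-o(F)-2\}$, which with $o(F)=0$ becomes $\max\{2n-1,\lceil 3m/2\rceil-1,m+n-2\}$, and verify that the third term dominates the other two. The comparison $m+n-2\ge 2n-1$ is immediate from the standing hypothesis $m\ge n+1$. For the comparison $m+n-2\ge\lceil 3m/2\rceil-1$ I would use the parity of $m$: because $m$ is even, $\lceil 3m/2\rceil=3m/2$, so the desired inequality reduces to $n-1\ge m/2$, and since $m\le 2n-1$ with $m$ even forces $m/2\le n-1$, this holds. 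Hence the maximum equals $m+n-2$, giving $R(P_n,K_1\vee F)\ge m+n-2$. Combining the two bounds yields $R(P_n,K_1\vee F)=m+n-2$.

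I do not expect any substantial obstacle: the whole content is the observation that the even-component hypothesis simultaneously annihilates the parity correction $\pa(m)$ in the upper bound and the defect $o(F)$ in the lower bound, while the range $n+1\le m\le 2n-1$ together with the evenness of $m$ is exactly what makes $m+n-2$ the governing term in the lower bound. The single point that requires any care is the implication $m\le 2n-1$ and $m$ even $\Rightarrow m/2\le n-1$, which relies on $m/2$ being an integer; this is precisely where the assumption that every component is even (rather than merely that one component is even) enters.
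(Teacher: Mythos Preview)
Your proposal is correct and follows precisely the approach of the paper, which derives Corollary~1 by observing that when $F$ has no odd component the upper and lower bounds of Theorem~4 coincide. You have simply supplied the arithmetic details (that $\pa(m)=o(F)=0$, and that $m+n-2$ dominates $2n-1$ and $\lceil 3m/2\rceil-1$ in the given range with $m$ even) that the paper leaves to the reader.
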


Note that Theorem 3 and Corollary 1 give all the path-quasar Ramsey
numbers $R(P_n,K_1\vee F)$ when $o(F)=0$, including all the Ramsey
numbers of paths versus fans.

\begin{corollary}
$$R(P_n,K_1\vee mK_2)=\left\{\begin{array}{ll}
2n-1,       & 1\leq m\leq\lfloor n/2\rfloor;\\
m+n-2,      & \lfloor n/2\rfloor+1\leq m\leq n-1; \\
t(n,m),     & n\geq 2 \mbox{ and }m\geq n.
\end{array}\right.$$
\end{corollary}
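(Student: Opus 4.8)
The plan is to obtain Corollary 2 as a direct specialization of Theorem 3 and Corollary 1 to the single linear forest $F=mK_2$, so that no new extremal construction or path-embedding is needed. First I would record the two structural facts about $mK_2$ that switch on the hypotheses of those results: it is a non-empty linear forest on $2m$ vertices, and each of its $m$ components is a copy of $K_2$ and hence has even order, so in particular $o(mK_2)=0$. The latter is what allows the sharp Corollary 1, rather than merely the two-sided bounds of Theorem 4, to govern the intermediate range.

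The key bookkeeping point is that the symbol $m$ plays different roles in the two settings: in Theorems 3--4 and Corollary 1 it counts the \emph{vertices} of the forest $F$, while in Corollary 2 it counts the $K_2$-\emph{components} of $mK_2$. Since $mK_2$ has $2m$ vertices, the vertex-count parameter to feed into the earlier results is $2m$. The proof then amounts to substituting this value into the three regimes those results cover, namely $2\le 2m\le n$ and $2m\ge 2n$ from Theorem 3 and $n+1\le 2m\le 2n-1$ from Corollary 1, and rewriting each as a condition on $m$. The low regime $2\le 2m\le n$ becomes $1\le m\le\lfloor n/2\rfloor$ with value $2n-1$; the high regime $2m\ge 2n$ becomes $m\ge n$ with value $t(n,2m)$; and the middle regime is handled by Corollary 1, available precisely because every component of $mK_2$ is even.

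The one place that calls for care --- the closest thing to an obstacle in an otherwise mechanical deduction --- is checking that, after this substitution, the three regimes partition the positive integers $m$ with no gap or overlap and with the displayed boundaries. Two elementary remarks settle it. Because $2m$ is even it can never hit the excluded odd value $2n-1$, so the passage from the middle regime (where $2m\le 2n-2$, i.e.\ $m\le n-1$) to the high regime (where $2m\ge 2n$, i.e.\ $m\ge n$) leaves no integer uncovered. And the lower endpoint $2m\ge n+1$ of the middle regime rewrites as $m\ge\lceil(n+1)/2\rceil$, where the identity $\lceil(n+1)/2\rceil=\lfloor n/2\rfloor+1$ (verified for $n$ even and odd separately) makes it dovetail with the upper endpoint $m\le\lfloor n/2\rfloor$ of the low regime. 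Assembling the three cases then yields the stated piecewise formula.
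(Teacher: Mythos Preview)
Your approach is exactly the paper's (implicit) one: Corollary 2 is meant as a direct specialization of Theorem 3 and Corollary 1 to $F=mK_2$, with no new argument needed, and your handling of the regime boundaries (in particular the observation that $2m$ never equals $2n-1$ and the identity $\lceil (n+1)/2\rceil=\lfloor n/2\rfloor+1$) is correct and careful.

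There is, however, one genuine slip in your write-up. You correctly feed the vertex-count parameter $2m$ into Theorems 3--4 and Corollary 1, obtaining the values $2n-1$, $2m+n-2$, and $t(n,2m)$ in the three regimes, and then assert that ``assembling the three cases yields the stated piecewise formula.'' It does not: the printed formula reads $m+n-2$ and $t(n,m)$, not $2m+n-2$ and $t(n,2m)$. These are typos in the paper (the authors continue to use $m$ for the vertex count $|V(F)|$ rather than for the number of $K_2$'s); for instance, with $n=3$ and $m=2$ the printed value $m+n-2=3$ is impossible since $K_1\vee 2K_2$ already has five vertices, whereas your value $2m+n-2=5$ is correct. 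So your derivation is right and your method matches the paper's, but instead of claiming agreement you should note the discrepancy and state the corrected formula that your argument actually proves.
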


We propose the following conjecture to complete this section.

\begin{conjecture}
If $n\geq 2$ and $n+1\leq m\leq 2n-1$, and $F$ is a non-empty linear
forest on $m$ vertices, then
$$R(P_n,K_1\vee F)=\max\left\{2n-1,\left\lceil\frac{3m}{2}\right\rceil-1,
m+n-o(F)-2\right\}.$$
\end{conjecture}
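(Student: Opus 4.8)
Since Theorem 4(2) already supplies the lower bound $R(P_n,K_1\vee F)\ge M$ with $M=\max\{2n-1,\lceil 3m/2\rceil-1,m+n-o(F)-2\}$, the whole task is to prove the matching upper bound $R(P_n,K_1\vee F)\le M$. The plan is to take an arbitrary graph $G$ on $M$ vertices, assume that $G$ contains no $P_n$, and produce a copy of $K_1\vee F$ in $\overline G$. The basic reduction is that a copy of $K_1\vee F$ in $\overline G$ with centre $v$ is nothing but a copy of $F$ sitting inside $\overline{G[S_v]}$, where $S_v$ denotes the set of non-neighbours of $v$ in $G$; thus it suffices to find one vertex $v$ with $|S_v|\ge m$ whose induced complement $\overline{G[S_v]}$ receives the prescribed linear forest $F$. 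Note that $G[S_v]$ inherits $P_n$-freeness from $G$, so $\overline{G[S_v]}$ is correspondingly dense.

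I would organise the argument as a dichotomy on the minimum degree of $G$. If $\delta(G)$ is large, I would invoke a longest-path theorem of Erd\H{o}s--Gallai/Dirac type---taking care of the possibility that $G$ is disconnected by locating a component large enough to carry the path---to conclude directly that $G$ already contains $P_n$; this is the regime guarded by the terms $2n-1$ and $\lceil 3m/2\rceil-1$. If $\delta(G)$ is small, a minimum-degree vertex $v$ has a large non-neighbourhood $S_v$, and the task becomes the embedding of $F$ into the dense graph $\overline{G[S_v]}$. The delicate point is to place the threshold between the two cases so precisely that both halves close at $M$; the parity of $m$ (equivalently of $o(F)$) and the number of odd components enter here, since an odd path $P_a$ should be embeddable into the complement of a $P_n$-free graph one vertex more economically than an even one, which is the mechanism behind the saving $m+n-o(F)-2$ rather than the cruder $m+n-2$ of Theorem 4(1).

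The genuinely hard regime is the one the conjecture is really about: when $m+n-o(F)-2$ is the dominant term and $o(F)\ge 1$, one has $M<m+n-1$, so no vertex need have as many as $m$ non-neighbours, and a vertex of the guaranteed minimum degree $n-2$ is not economical enough. Here I would abandon the bare degree count and use the fine extremal structure of $P_n$-free graphs: a $P_n$-free graph with large minimum degree must be essentially a disjoint union of cliques of order $n-1$ (a $2$-connected block larger than a clique already contains $P_n$ via a long cycle), and such a clique partition exists as a spanning subgraph only under an arithmetic condition on $M$ modulo $n-1$. It is precisely the failure of this divisibility, governed by the parity of $m$ and by $o(F)$, that I would leverage to force either a low-degree vertex with an accommodating non-neighbourhood or a long path. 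I would run this as an induction on the number of components of $F$, peeling off one component at a time in a Parsons-style recursion that trades a unit decrease in $n$ against the deletion of a component, with Theorem 3 (the case $m\ge 2n$, whose value is $t(n,m)$) as the base.

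The main obstacle, then, is twofold, and both parts live in the embedding/extremal step. First, one must prove a sharp lemma on embedding a prescribed linear forest into the complement of a $P_n$-free graph in which the exact vertex cost is $m$ plus a correction that counts even components one way and odd components another---getting this constant right is what pins down both $\lceil 3m/2\rceil-1$ and the $-o(F)$ term simultaneously. Second, one must verify that the extremal configurations behind the three terms of $M$---the disjoint pair of $K_{n-1}$'s (whose complement is $K_{n-1,n-1}$) for $2n-1$, the clique-partition obstruction for $m+n-o(F)-2$, and a triangle/small-clique packing for $\lceil 3m/2\rceil-1$---are genuinely the worst cases, so that no better blocking graph exists on $M-1$ vertices. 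I expect reconciling all three thresholds at once, rather than each in isolation, to be where the real difficulty of the conjecture resides.
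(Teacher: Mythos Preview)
The statement you are attempting to prove is Conjecture~1 in the paper, and the paper gives \emph{no proof} of it: the authors explicitly leave it open, proving only the lower bound (Theorem~4(2)) and the matching upper bound in the special case $o(F)=0$ (Corollary~1, via the sandwich $R(P_n,C_3)\le R(P_n,K_1\vee F)\le R(P_n,W_{m+\pa(m)})$). So there is nothing in the paper to compare your argument against.

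As for your proposal itself, it is a reasonable strategic outline but not a proof, and you are candid about this: you identify the two hard pieces (a sharp embedding lemma for linear forests in complements of $P_n$-free graphs with the correct $-o(F)$ correction, and a structural classification ruling out blocking graphs on $M-1$ vertices beyond the three listed extremal families) without actually supplying either. The dichotomy on $\delta(G)$ and the appeal to the near-clique structure of dense $P_n$-free graphs are natural first moves, but the step where you say ``I would run this as an induction on the number of components of $F$, peeling off one component at a time in a Parsons-style recursion'' is where the argument is genuinely incomplete: such a recursion changes both $m$ and $o(F)$ simultaneously, and you have not shown that the induction hypothesis stays inside the range $n+1\le m\le 2n-1$ or that the recursion bottoms out at a case covered by Theorem~3. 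Until those two lemmas are stated precisely and proved, what you have is a plausible plan of attack on an open problem, not a proof.
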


\section{Preliminaries}

The following useful result is deduced from Dirac \cite{Dirac}. We
present it here without a proof.

\begin{theorem}
  Every connected graph $G$ contains a path of order at
  least $\min\{\nu(G),2\delta(G)+1\}$.
\end{theorem}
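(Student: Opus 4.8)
The plan is to work with a longest path and show that either it already spans the whole graph or its endpoints are forced to have many neighbors strung along the path. Let $P=v_1v_2\cdots v_k$ be a path of maximum order in $G$, so that $k$ is exactly the quantity to be bounded from below. If $k=\nu(G)$, then $P$ is a Hamiltonian path and trivially $k\ge\min\{\nu(G),2\delta(G)+1\}$. Hence the whole difficulty lies in the case $k<\nu(G)$, where I would aim to establish the sharper inequality $k\ge 2\delta(G)+1$.

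The first observation is that maximality of $P$ forces $N(v_1)\subseteq\{v_2,\ldots,v_k\}$ and $N(v_k)\subseteq\{v_1,\ldots,v_{k-1}\}$, since a neighbor of an endpoint lying off $P$ would immediately extend it. Writing $A=\{i:v_i\in N(v_1)\}$ and $B=\{i:v_i\in N(v_k)\}$, both the shifted set $A-1:=\{i-1:i\in A\}$ and the set $B$ lie inside $\{1,\ldots,k-1\}$, and each has cardinality at least $\delta(G)$. The crux is then a simple dichotomy according to whether these two sets meet.

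If $(A-1)\cap B=\emptyset$, then $k-1\ge|A-1|+|B|\ge 2\delta(G)$, which is precisely the desired bound $k\ge 2\delta(G)+1$. If instead the two sets share an index $j$, then $v_{j+1}\in N(v_1)$ and $v_j\in N(v_k)$, and the rotation $v_1v_2\cdots v_jv_kv_{k-1}\cdots v_{j+1}v_1$ is a cycle $C$ through all $k$ vertices of $P$. This is where connectivity enters: since $k<\nu(G)$, some vertex $u\notin V(C)$ must be joined to a vertex of $C$, and attaching $u$ and then unrolling the cycle produces a path on $k+1$ vertices, contradicting the maximality of $P$. Thus the second alternative cannot occur when $k<\nu(G)$, and we are always left with the degree bound, completing the argument.

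I expect the main obstacle to be the rotation step: arranging the index bookkeeping so that the crossing configuration $v_{j+1}\in N(v_1)$ together with $v_j\in N(v_k)$ genuinely yields a cycle through \emph{all} of $V(P)$ rather than a shorter one, and then checking cleanly that connectivity lets this cycle absorb an outside vertex into a strictly longer path. Everything else reduces to routine counting on the two subsets of $\{1,\ldots,k-1\}$.
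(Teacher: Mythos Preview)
Your argument is correct and is in fact the standard longest-path rotation proof of this Dirac-type bound. The paper, however, does not supply its own proof of this statement: it is quoted as a consequence of Dirac's theorem and explicitly ``present[ed] here without a proof.'' Hence there is nothing to compare your approach against; your write-up stands on its own and would serve perfectly well as the missing justification.
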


We follow the notation in \cite{Li_Ning}. For integers $s,t$, the
\emph{interval} $[s,t]$ is the set of integers $i$ with $s\leq i\leq
t$. Note that if $s>t$, then $[s,t]=\emptyset$. Let $X$ be a subset
of $\mathbb{N}$. We set $\mathcal {L}(X)=\{\sum_{i=1}^kx_i: x_i\in
X, k\in \mathbb{N}\}$, and suppose $0\in\mathcal{L}(X)$ for any set
$X$. Note that if $1\in X$, then $\mathcal{L}(X)=\mathbb{N}$. For an
interval $[s,t]$, we use $\mathcal{L}[s,t]$ instead of
$\mathcal{L}([s,t])$.

The following lemma was proved by the authors in \cite{Li_Ning}. We
include the proof here for the completeness of our discussions.

\begin{lemma}
  $t(n,m)=\min\{t: t\notin \mathcal {L}[t-m+1,n-1]\}$.
\end{lemma}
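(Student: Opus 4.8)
The plan is to convert the membership condition into an elementary arithmetic statement about the number of summands and then optimise over that number. I would begin from the fact that, for integers $1\le s\le d$, the sums of exactly $k$ elements of the interval $[s,d]$ are precisely the integers of $[ks,kd]$ (to move between consecutive sums, change one summand by $1$). Taking $s=t-m+1$ and $d=n-1$, this shows that for every $t$ with $t-m+1\ge 2$ one has $t\in\mathcal{L}[t-m+1,n-1]$ if and only if $k(t-m+1)\le t\le k(n-1)$ for some $k\ge 1$. The values $t\le m$ require no work: then $t-m+1\le 1$, so $1$ lies in the interval and $\mathcal{L}[t-m+1,n-1]=\mathbb{N}$, whence $t$ is representable. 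Thus $\min\{t:t\notin\mathcal{L}[t-m+1,n-1]\}\ge m+1$, and for all larger $t$ the interval consists of integers $\ge 2$, so the equivalence above applies.

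Next I would remove the dependence on $t$. For fixed $k\ge 2$, the inequality $k(t-m+1)\le t$ is equivalent to $t\le U_k:=\frac{k(m-1)}{k-1}$, and the remaining inequality is $t\le V_k:=k(n-1)$; hence $t$ is a sum of exactly $k$ terms iff $t\le\min(U_k,V_k)$, while a single term works iff $t\le V_1=n-1$. Since $U_k$ and $V_k$ do not involve $t$, representability is downward closed, so the representable integers form an initial segment $\{t:t\le M\}$ with $M=\max_{k\ge 1}\lfloor\min(U_k,V_k)\rfloor$, and the quantity sought is $M+1$. Now $V_k$ increases and $U_k$ decreases in $k$, crossing at $k=\alpha+1$; using $\beta-1<\alpha\le\beta$ one gets $\min(U_k,V_k)=V_k$ for $k\le\beta$ and $\min(U_k,V_k)=U_k$ for $k\ge\beta+1$. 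Hence the maximum is attained at $k=\beta$ or $k=\beta+1$ (the term $k=1$ contributes only $n-1\le\beta(n-1)$), and
$$M=\max\Big\{\beta(n-1),\ \Big\lfloor\tfrac{(\beta+1)(m-1)}{\beta}\Big\rfloor\Big\}=\max\Big\{\beta(n-1),\ (m-1)+\Big\lfloor\tfrac{m-1}{\beta}\Big\rfloor\Big\},$$
where the last equality uses that $m-1$ is an integer.

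Finally I would check that $M+1$ coincides with the two-line definition of $t(n,m)$. Adding $1$ turns the two candidates into $(n-1)\beta+1$ and $m+\lfloor(m-1)/\beta\rfloor$, which are exactly the two listed values, so it suffices to show that the first is the larger precisely when $\alpha\le\gamma$. Clearing denominators, $\alpha\le\gamma=\frac{\beta^2}{\beta+1}$ is equivalent to $(m-1)+\frac{m-1}{\beta}\le\beta(n-1)$; passing to the floor gives $m+\lfloor(m-1)/\beta\rfloor\le(n-1)\beta+1$, so the first candidate wins. The main obstacle is the converse direction: from $\alpha>\gamma$, that is $\beta(n-1)<(m-1)+\frac{m-1}{\beta}$, I must recover the integer inequality $\beta(n-1)\le(m-1)+\lfloor(m-1)/\beta\rfloor$. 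Here one uses that $\beta(n-1)$ and $(m-1)+\lfloor(m-1)/\beta\rfloor$ are integers while $\frac{m-1}{\beta}-\lfloor\frac{m-1}{\beta}\rfloor<1$, so a strict inequality against the non-integer right-hand side forces the desired non-strict integer inequality. Combining the two regimes yields $t(n,m)=M+1=\min\{t:t\notin\mathcal{L}[t-m+1,n-1]\}$.
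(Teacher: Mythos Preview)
Your proposal is correct and follows essentially the same route as the paper: translate $t\in\mathcal{L}[t-m+1,n-1]$ into the existence of a $k$ with $k(t-m+1)\le t\le k(n-1)$, observe the set of representable $t$ is an initial segment, locate the crossover of the two bounding functions at $k=\alpha+1$, and read off $\max(T)=\max\{\beta(n-1),\lfloor(m-1)/\beta\rfloor+m-1\}$. The only differences are that you handle the range $t\le m$ explicitly and you spell out why the $\max$ of the two candidates matches the $\alpha\le\gamma$ versus $\alpha>\gamma$ split in the definition of $t(n,m)$---steps the paper leaves to the reader.
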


\begin{proof}
Set $T=\{t: t\in\mathcal{L}[t-m+1,n-1]\}$. Note that if $t\in T$,
then $t-1\in T$. So it is sufficient to prove that
$t(n,m)=\max(T)+1$.

Note that
\begin{align*}
                & t\in T \Leftrightarrow t\in\mathcal{L}[t-m+1,n-1]\\
\Leftrightarrow & t\in[k(t-m+1),k(n-1)], \mbox{ for some integer } k\\
\Leftrightarrow & t\leq\frac{k}{k-1}(m-1) \mbox{ and } t\leq k(n-1),
                    \mbox{ for some integer } k\\
\Leftrightarrow & t\leq k(n-1) \mbox{ for some integer } k<\alpha+1,
                    \mbox{ or}\\
                & t\leq\left\lfloor\frac{m-1}{k-1}\right\rfloor+m-1,
                    \mbox{ for some integer } k\geq\alpha+1.
\end{align*}
This implies that
$$T=\left\{t: t\leq k(n-1), k\leq\beta\right\}\cup\left\{t:
t\leq\left\lfloor\frac{m-1}{k-1}\right\rfloor+m-1,
k\geq\beta+1\right\}.$$ Thus
\begin{align*}
\max(T) & =\max\left\{(n-1)\beta,
            \left\lfloor\frac{m-1}{\beta}\right\rfloor+m-1\right\}\\
        & =\left\{\begin{array}{ll}
            (n-1)\cdot\beta,                &\alpha\leq\gamma;\\
            \lfloor(m-1)/\beta\rfloor+m-1,  &\alpha>\gamma.
            \end{array}\right.
\end{align*}
We conclude that $t(n,m)=\max(T)+1$.
\end{proof}

We use $C_m$ to denote the cycle on $n$ vertices, and $W_m$ to
denote the wheel on $m+1$ vertices, i.e., the graph obtained by
joining a vertex to every vertex of a $C_m$. We will use the
following formulas for path-cycle Ramsey numbers and for path-wheel
Ramsey numbers.

\begin{theorem}[Faudree et al. \cite{Faudree_Lawrence_Parsons_Schelp}]
If $n\geq 2$ and $m\geq 3$, then
$$R(P_n,C_m)=\left\{\begin{array}{ll}
2n-1,           & \mbox{for } n\geq m \mbox{ and } m \mbox{ is odd};\\
n+m/2-1,        & \mbox{for } n\geq m \mbox{ and } m \mbox{ is even};\\
\max\{m+\lfloor n/2\rfloor-1,2n-1\},    & \mbox{for } m>n
                    \mbox{ and } m \mbox{ is odd};\\
m+\lfloor n/2\rfloor-1,     & \mbox{for } m>n \mbox{ and } m \mbox{
                    is even}.
\end{array}\right.$$
\end{theorem}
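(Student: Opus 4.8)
The plan is to prove the formula by establishing matching lower and upper bounds in each of the four regimes. For the lower bounds I would exhibit, for each case, a graph $G$ on $R(P_n,C_m)-1$ vertices that contains no $P_n$ and whose complement contains no $C_m$; for the upper bounds I would assume $G$ contains no $P_n$ and force a $C_m$ into $\overline{G}$ by an extremal argument.

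For the lower bounds three constructions suffice. (i) To realize the value $2n-1$ when $m$ is odd, take $G=2K_{n-1}$ on $2n-2$ vertices: its longest path is $P_{n-1}$, and $\overline{G}=K_{n-1,n-1}$ is bipartite, hence contains no odd cycle and in particular no $C_m$. (ii) To realize $n+m/2-1$ when $m$ is even and $n\geq m$, take $G=K_{n-1}\cup K_{m/2-1}$ on $n+m/2-2$ vertices: since $n-1\geq m/2-1$ its longest path is $P_{n-1}$, while $\overline{G}=K_{n-1,\,m/2-1}$ is complete bipartite with longest cycle $C_{m-2}$, so it has no $C_m$. (iii) To realize $m+\lfloor n/2\rfloor-1$ when $m>n$, take $G=K_{\lfloor n/2\rfloor-1}\vee\overline{K_{m-1}}$ on $m+\lfloor n/2\rfloor-2$ vertices: since the independent part dominates, its longest path has order $2\lfloor n/2\rfloor-1\leq n-1$, while $\overline{G}=K_{m-1}\cup\overline{K_{\lfloor n/2\rfloor-1}}$ has its unique nontrivial component equal to $K_{m-1}$, whose longest cycle is $C_{m-1}$, so again no $C_m$. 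Taking in each case the maximum of the applicable bounds yields exactly the claimed lower bounds.

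For the upper bounds I would argue by contradiction: suppose $G$ has $R(P_n,C_m)$ vertices, contains no $P_n$, and $\overline{G}$ contains no $C_m$. The first step is to extract density: since $G$ has no $P_n$, the Erd\H{o}s--Gallai theorem bounds $e(G)$ by $\tfrac12(n-2)\nu(G)$, so $\overline{G}$ is dense and, after passing to a subgraph of large minimum degree, I would locate long cycles in $\overline{G}$ by Dirac's and Bondy's circumference and pancyclicity theorems (or, component-wise, by the minimum-degree path bound of Dirac recalled above, applied inside $\overline{G}$). The even cases are then reached by finding a cycle of length at least $m$ in a suitably chosen dense, non-bipartite piece and trimming it to length exactly $m$; the odd cases additionally use that $\overline{G}$ cannot be close to bipartite once it is dense enough on $2n-1$ vertices, which is precisely what forces the extra $2n-1$ term.

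The main obstacle is the upper bound, and within it two points deserve the most care. First, extremal density only yields a cycle of length at least $m$, whereas the theorem demands a cycle of \emph{exactly} $m$ vertices with the correct parity; bridging this gap requires a pancyclicity-type statement together with a careful treatment of the extremal configurations (near-complete-bipartite graphs), where cycles of a prescribed parity or length are missing, and these are exactly the configurations realizing the lower-bound constructions above. Second, because ``no $P_n$'' does not bound the maximum degree of $G$, one cannot simply assert a global minimum-degree condition in $\overline{G}$; the vertices of very high $G$-degree must be handled separately, typically by a longest-path rotation-extension argument showing that their neighborhoods are sparse. I would expect the cleanest route to run the entire upper bound by induction on $n+m$, reducing to the boundary values such as $R(P_2,C_m)$ and to small $m$, where the statement can be verified directly.
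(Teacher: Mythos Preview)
The paper does not prove this theorem: it is quoted from Faudree, Lawrence, Parsons and Schelp and used as a black box in the proof of Theorem~3 (to obtain $R(P_n,C_3)=2n-1$). There is therefore no ``paper's own proof'' to compare your proposal against.

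Your lower-bound constructions are correct and are, in fact, the standard ones. Your upper-bound outline, however, is only a plan and not yet a proof: the passage from ``$\overline{G}$ is dense'' to ``$\overline{G}$ contains a cycle of length exactly $m$'' is the whole content of the Faudree--Lawrence--Parsons--Schelp argument, and neither Erd\H{o}s--Gallai nor Dirac-type circumference bounds give this directly. The original proof proceeds case by case with explicit path/cycle manipulations (rotations and chord arguments on a longest path in $\overline{G}$) rather than by a single global pancyclicity statement, and the delicate case is precisely the odd-$m$, $m>n$ regime where the $\max$ appears. If you intend to supply a self-contained proof, you should either reproduce those rotation arguments or cite a pancyclicity theorem strong enough to output a cycle of a \emph{prescribed} length (e.g.\ Bondy's pancyclicity theorem requires a Hamiltonian-type degree condition you have not verified, and its extremal exception is exactly the bipartite case responsible for the parity split).
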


\begin{theorem}
\
\begin{mathitem}
\item[(1)] (Chen et al. \cite{Chen_Zhang_Zhang}) If $3\leq m\leq n+1$, then
$$R(P_n,W_m)=\left\{\begin{array}{ll}
3n-2,   &m \mbox{ is odd};\\
2n-1,   &m \mbox{ is even}.
\end{array}\right.$$
\item[(2)] (Zhang \cite{Zhang}) If $n+2\leq m\leq 2n$, then
$$R(P_n,W_m)=\left\{\begin{array}{ll}
3n-2,   &m \mbox{ is odd};\\
m+n-2,  &m \mbox{ is even}.
\end{array}\right.$$
\item[(3)] (Li and Ning \cite{Li_Ning}) If $n\geq 2$ and $m\geq 2n+1$, then
$$R(P_n,W_m)=t(n,m).$$
\end{mathitem}
\end{theorem}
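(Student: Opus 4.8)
The plan is to prove the three ranges of $m$ separately, in each case matching an explicit lower-bound colouring against a structural upper-bound argument, and to let the parity of $m$ drive the two columns of each table. For the lower bounds I would begin from disjoint unions of cliques, whose complements are complete multipartite and hence easy to keep wheel-free. When $m$ is odd, $G=3K_{n-1}$ is $P_n$-free and $\overline G=K_{n-1,n-1,n-1}$; since a wheel $W_m$ with $m$ odd has an odd rim cycle lying entirely inside the neighbourhood of its hub, and every such neighbourhood in a complete tripartite graph induces a bipartite, hence odd-cycle-free, graph, $\overline G$ contains no odd wheel. This yields $R(P_n,W_m)\ge 3n-2$ for all odd $m$ covered by parts (1) and (2). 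For even $m$ in part (1) I would instead take $G=2K_{n-1}$, whose complement $K_{n-1,n-1}$ has every neighbourhood independent and so admits no wheel at all, giving $2n-1$.

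The delicate lower bound is the value $m+n-2$ for even $m$ with $n+2\le m\le 2n$: the split-graph and star-type colourings used above fall short of the required $m+n-3$ vertices, so one must build a colouring tuned to the length of the even rim. Concretely I would look for a join $\overline G=K_a\vee H$ with $H$ chosen $C_m$-free and $a$ just small enough that no vertex dominates a $C_m$ of the correct length, the count depending on $m$ within the range; matching $m+n-3$ exactly is where this construction must be done carefully.

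For the upper bounds I would fix a $P_n$-free graph $G$ on the asserted number of vertices and produce $W_m\subseteq\overline G$ in two stages: first locate a rim cycle $C_m$ of the correct length and parity in $\overline G$, then attach a hub. The first stage is precisely a path--cycle Ramsey statement, so I would feed the vertex count into Theorem 6 to guarantee the $C_m$. For the hub I would exploit that $G$ is $P_n$-free: by Theorem 5 no connected piece of $G$ can have large minimum degree without forcing a long path, so in $\overline G$ the vertices retain large neighbourhoods, and a counting argument forces some vertex adjacent to all of the already-found rim. The parity split resurfaces exactly here, since an odd rim can be placed only when $\overline G$ is far from bipartite, which is what lifts the odd thresholds to $3n-2$.

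For the third range $m\ge 2n+1$ the rim is so long that the wheel behaves like the star, and the argument turns arithmetic. Because $W_m\supseteq K_{1,m}$, any critical colouring for $R(P_n,K_{1,m})=t(n,m)$ is simultaneously wheel-free in the complement, which hands over the lower bound $t(n,m)$. For the upper bound I would use Lemma 1, which equates $t(n,m)$ with $\min\{t:t\notin\mathcal L[t-m+1,n-1]\}$: a wheel-free complement forces $G$ to be $P_n$-free with a minimum-degree lower bound, so every component of $G$ has order in $[t-m+1,n-1]$ (the lower end from the degree bound, the upper end from Theorem 5), whence the total order $t$ lies in $\mathcal L[t-m+1,n-1]$, impossible for $t=t(n,m)$. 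The main obstacle I anticipate is the upper bound in the middle even range $n+2\le m\le 2n$: there one must simultaneously secure a rim of length exactly $m$ and correct parity together with a hub dominating it, and the near-bipartite extremal configurations must be analysed directly rather than dispatched by the generic path--cycle input.
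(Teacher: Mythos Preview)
The paper does not contain a proof of this statement: Theorem~7 is a compilation of results quoted from three separate references (Chen--Zhang--Zhang, Zhang, and Li--Ning), and it is used in the paper only as a black box in the proofs of Theorems~3 and~4. There is therefore no ``paper's own proof'' against which to compare your proposal; you are attempting to re-derive cited literature.

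That said, since you have written a sketch, a brief comment on its content: your lower-bound constructions for parts~(1) and~(2) are the standard ones and are correct (and the even case $n+2\le m\le 2n$ that you flag as delicate is handled by $G=K_{n-1}\cup 2K_{m/2-1}$, whose complement is complete tripartite with all neighbourhoods too small or too unbalanced to host a $C_m$). Your upper-bound sketch for part~(3), however, has a genuine gap. You assert that in a $P_n$-free, $\overline{W_m}$-free graph $G$ on $t=t(n,m)$ vertices ``every component of $G$ has order in $[t-m+1,n-1]$'', but $P_n$-freeness does not force components to have fewer than $n$ vertices; Theorem~5 only gives that a large component has a vertex of small degree. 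The argument must therefore split into the two cases exactly as in the proof of Theorem~2 in this paper: either all components are small (and then the $\mathcal L$-arithmetic applies), or some component is large, and one finds a low-degree vertex whose non-neighbourhood is big enough to host not just a star but the full rim~$C_m$ --- and it is this last step, producing a cycle of the \emph{correct length} inside the non-neighbourhood, that carries the real work and that your sketch does not address.
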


\section{Proofs of the theorems}

\textbf{Proof of Theorem 2.} Let $r=t(n,m)$. By Lemma 1,
$t(n,m)=\min\{t: t\notin \mathcal {L}[t-m+1,n-1]\}$. Thus $r-1\in
\mathcal {L}[r-m,n-1]$. Let $r-1=\sum_{i=1}^k r_i$, where
$r_i\in[r-m,n-1]$, $1\leq i\leq k$. Let $G$ be a graph with $k$
components $H_1,\ldots,H_k$ such that $H_i$ is a clique on $r_i$
vertices. Note that $G$ contains no $P_n$ since every component of
$G$ has less than $n$ vertices; and $\overline{G}$ contains no
$K_{1,m}$ since every vertex of $G$ has less than $m$ nonadjacent
vertices. This implies that $R(P_n,K_{1,m})\geq\nu(G)+1=r$.

Now we will prove that $R(P_n,K_{1,m})\leq r$. Assume not. Let $G$
be a graph on $r$ vertices such that $G$ contains no $P_n$ and
$\overline{G}$ contains no $K_{1,m}$.

\begin{claim}
$m+\lfloor n/2\rfloor\leq r\leq m+n-1$, i.e., $1\leq m+n-r\leq\lceil
n/2\rceil$.
\end{claim}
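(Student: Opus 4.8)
The plan is to treat the Claim as a purely numerical statement about the value $r=t(n,m)$ and to read off both inequalities from the characterization in Lemma 1. Recall that $t(n,m)=\min\{t:t\notin\mathcal{L}[t-m+1,n-1]\}$, and that in the proof of Lemma 1 the set $T=\{t:t\in\mathcal{L}[t-m+1,n-1]\}$ was shown to satisfy $t\in T\Rightarrow t-1\in T$, so that $t(n,m)=\max(T)+1$. Thus it suffices to locate $\max(T)$ relative to the two target values $m+\lfloor n/2\rfloor-1$ and $m+n-2$.

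For the upper bound $r\le m+n-1$ I would simply test $t=m+n-1$. Here $t-m+1=n$, so $[t-m+1,n-1]=[n,n-1]=\emptyset$ and $\mathcal{L}[n,n-1]=\{0\}$; since $m+n-1>0$ we get $m+n-1\notin\mathcal{L}[n,n-1]$, whence $t(n,m)\le m+n-1$. Alternatively one can argue graph-theoretically: if $r\ge m+n$, then every vertex of $\overline{G}$ has degree at most $m-1$, so $\delta(G)\ge (r-1)-(m-1)=r-m\ge n$; applying Theorem 5 to any component $H$ of $G$ then yields a path of order at least $\min\{\nu(H),2\delta(H)+1\}\ge n$, contradicting the absence of $P_n$ in $G$.

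For the lower bound $r\ge m+\lfloor n/2\rfloor$ it is enough, by the downward closedness of $T$, to verify that $m+\lfloor n/2\rfloor-1\in T$, that is, that $m+\lfloor n/2\rfloor-1\in\mathcal{L}[\lfloor n/2\rfloor,n-1]$. The key sub-fact I would establish is that whenever an interval $[s,t]$ satisfies $t\ge 2s-1$, the consecutive blocks $[ks,kt]$ and $[(k+1)s,(k+1)t]$ overlap or abut for every $k\ge 1$, so that $\mathcal{L}[s,t]$ contains every integer at least $s$. Applying this with $s=\lfloor n/2\rfloor$ and $t=n-1$ is legitimate since $n-1\ge 2\lfloor n/2\rfloor-1$ always holds, and the target value $m+\lfloor n/2\rfloor-1$ is at least $\lfloor n/2\rfloor$ because $m\ge 1$; hence it lies in $\mathcal{L}[\lfloor n/2\rfloor,n-1]$ and the bound follows.

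The only real obstacle is this covering lemma for $\mathcal{L}$ of an interval used in the lower bound; once the overlap condition $t\ge 2s-1$ is checked, the rest is direct substitution into Lemma 1. Finally I would record the equivalent reformulation $1\le m+n-r\le\lceil n/2\rceil$, which follows immediately from $r\le m+n-1$ and $r\ge m+\lfloor n/2\rfloor$ together with the identity $n-\lfloor n/2\rfloor=\lceil n/2\rceil$.
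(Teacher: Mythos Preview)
Your argument is correct, but the lower bound proceeds quite differently from the paper's proof. For $r\le m+n-1$ your first argument is exactly the paper's. For $r\ge m+\lfloor n/2\rfloor$ the paper works from the \emph{negative} side: since $r\notin\mathcal{L}[r-m+1,n-1]$, there is some $k\ge 1$ with $r\in[k(n-1)+1,(k+1)(r-m+1)-1]$, which gives simultaneously $r\ge k(n-1)+1$ and $r\ge\frac{k+1}{k}m-1$; a short case analysis on the size of $m$ relative to $(k^2 n-k^2+2k)/(k+1)$ then yields $m+n-r\le (n+1)/2$. You instead work from the \emph{positive} side, exhibiting directly that $m+\lfloor n/2\rfloor-1\in T$ via the elementary covering fact that $t\ge 2s-1$ forces $\mathcal{L}[s,t]\supseteq\{s,s+1,\ldots\}$, applied with $s=\lfloor n/2\rfloor$, $t=n-1$. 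Your route is cleaner and avoids the algebraic case split entirely; the paper's route, by contrast, extracts the bound from the structure of the gaps already described in the proof of Lemma~1 and so needs no auxiliary lemma. Both are valid and short.
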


\begin{proof}
Let $r'=m+n-1$. Since $r'-m+1=n$, $[r'-m+1,n-1]=\emptyset$, and
$r'\notin \mathcal {L}(\emptyset)=\{0\}$, we have $r\leq r'=m+n-1$
and hence $m+n-r\geq 1$.

Now we prove that $m+n-r\leq(n+1)/2$. By Lemma 1, $r\notin \mathcal
{L}[r-m+1,n-1]$. Thus $r\notin[k(r-m+1),k(n-1)]$, for every $k$.
That is, $r\in[k(n-1)+1,(k+1)(r-m+1)-1]$, for some $k$. This implies
that
$$r\geq k(n-1)+1 \mbox{ and } r\geq\frac{k+1}{k}m-1,$$
for some $k\geq 1$.

If $m\leq(k^2n-k^2+2k)/(k+1)$, then
\begin{align*}
    & m+n-r\leq \frac{k^2n-k^2+2k}{k+1}+n-(k(n-1)+1)\\
=   & \frac{n+2k-1}{k+1}\leq\frac{n+1}{2}.
\end{align*}

If $m>(k^2n-k^2+2k)/(k+1)$, then
\begin{align*}
    & m+n-r\leq m+n-\left(\frac{k+1}{k}m-1\right)\\
=   & n-\frac{m}{k}+1<n-\frac{k^2n-k^2+2k}{k(k+1)}+1\\
=   & \frac{n+2k-1}{k+1}\leq\frac{n+1}{2}.
\end{align*}

Thus we have $m+n-r\leq\lfloor(n+1)/2\rfloor=\lceil n/2\rceil$.
\end{proof}

\begin{case}
  Every component of $G$ has order less than $n$.
\end{case}

Let $H_i$, $1\leq i\leq k=\omega(G)$, be the components of $G$.
Since $r\notin \mathcal {L}[r-m+1,n-1]$, there is a component, say
$H_1$, with order at most $r-m$. Thus $\sum_{i=2}^k\nu(H_i)\geq m$.
Let $v$ be a vertex in $H_1$. Since $v$ is nonadjacent to every
vertex in $G-H_1$, $\overline{G}$ contains a $K_{1,m}$ with the
center $v$, a contradiction.

\begin{case}
  There is a component of $G$ with order at least $n$.
\end{case}

Let $H$ be a component of $G$ with $\nu(H)\geq n$. If every vertex
of $H$ has degree at least $\lfloor n/2\rfloor$, then by Theorem 5,
$H$ contains a $P_n$, a contradiction. Thus there is a vertex $v$ in
$H$ with $d(v)\leq\lfloor n/2\rfloor-1$. Let $G'=G-v-N(v)$. Then by
Claim 1, $$\nu(G')=\nu(G)-1-d(v)\geq
r-\left\lfloor\frac{n}{2}\right\rfloor\geq m.$$ Since $v$ is
nonadjacent to every vertex in $G'$, $\overline{G}$ contains a
$K_{1,m}$ with the center $v$, a contradiction.

The proof is complete. \hfill$\Box$

\noindent\textbf{Proof of Theorem 3.} If $m=2$, then $K_1\vee F$ is
a triangle (recall that $F$ is non-empty). From Theorem 6, we get
that $R(P_n,C_3)=2n-1$.

If $3\leq m\leq n$, then $K_1\vee F$ is a supergraph of $C_3$ and a
subgraph of $W_{m+\pa(m)}$, we have
$$R(P_n,C_3)\leq R(P_n,K_1\vee F)\leq R(P_n,W_{m+\pa(m)}).$$
By Theorems 6 and 7, $R(P_n,C_3)=R(P_n,W_{m+\pa(m)})=2n-1$. We
conclude that $R(P_n,K_1\vee F)=2n-1$.

Now we deal with the case $m\geq 2n$. Note that $K_1\vee F$ is a
supergraph of $K_{1,m}$ and a subgraph of $W_m$. We have
$$R(P_n,K_{1,m})\leq R(P_n,K_1\vee F)\leq R(P_n,W_m).$$
By Theorems 2 and 7, $R(P_n,K_{1,m})=R(P_n,W_m)=t(n,m)$ (we remark
that if $m=2n$, then $m+n-2=t(n,m)$). We conclude that
$R(P_n,K_1\vee F)=t(n,m)$.

The proof is complete. \hfill$\Box$

\noindent\textbf{Proof of Theorem 4.} Since $K_1\vee F$ is a
subgraph of $W_{m+\pa(m)}$, by Theorem 7, we have
$$R(P_n,K_1\vee F)\leq m+n-2+\pa(m).$$

Now we construct three graphs. Let
$$\begin{array}{l}
G_1=2K_{n-1},\\
G_2=K_{\lfloor m/2\rfloor}\cup 2K_{\lceil m/2\rceil-1} \mbox{ and}\\
G_3=K_{n-1}\cup 2K_{(m-o(F))/2-1}.
\end{array}$$ One can check that all the three graphs contain no
$P_n$ and their complements contain no $K_1\vee F$. This implies
that $R(P_n,K_1\vee F)\geq\max\{\nu(G_i)+1: i=1,2,3\}$. Since
$\nu(G_1)=2n-2$, $\nu(G_2)=\lceil 3m/2\rceil-2$ and
$\nu(G_3)=m+n-o(F)-3$, we get that
$$R(P_n,K_1\vee F)\geq\max\left\{2n-1,\left\lceil\frac{3m}{2}\right\rceil-1,
m+n-o(F)-2\right\}.$$

The proof is complete. \hfill$\Box$

\end{document}